\newtheorem{theorem}{Theorem}
\newtheorem{corollary}{Corollary}
\newtheorem{lemma}{Lemma}
\newtheorem{definition}{Definition}
\newtheorem{conjecture}{Conjecture}
\begin{document}
\title[Partitions into two sizes of part]{Combinatorial proof of a congruence for partitions into two sizes of part}
\author{Eli R. DeWitt and William J. Keith} \address{Department of Mathematical Sciences\\ Michigan Tech\\ Houghton, MI  49931-1295}
\email{erdewitt@mtu.edu, wjkeith@mtu.edu}
\thanks{2020 {\em Mathematics Subject Classification.} Primary: 11P83; Secondary:  05A17, 11P84, 11P82, 11F33.\\\indent 
{\em Key words and phrases.} Partition function; bijection; congruence modulo 4.}

\begin{abstract}
Previous work showed that, for $\nu_2(n)$ the number of partitions of $n$ into exactly two part sizes, one has $\nu_2(16n+14) \equiv 0 \pmod{4}$.  The earlier proof required the technology of modular forms, and a combinatorial proof was desired.  This article provides the requested proof, in the process refining divisibility to finer subclasses.  Some of these subclasses have counts closely related to the divisor function $d(16n+14)$, and we offer a conjecture on a potential rank statistic.
\end{abstract}

\maketitle

\section{Introduction}

Let the number of partitions of $n$ in which exactly $k$ sizes of part appear be denoted by $\nu_k(n)$.  For example, $\nu_2(6) = 6$, counting $$(5,1), (4,2), (4,1,1), (3,1,1,1), (2,2,1,1), (2,1,1,1,1).$$  Of the remaining five partitions of 6, $\nu_3(6) = 1$ counts $(3,2,1)$ and $\nu_1(6) = 4$ counts the partitions $(6)$, $(3,3)$, $(2,2,2)$, $(1,1,1,1,1,1)$.  Clearly $\nu_1(n) = d(n)$, the number of divisors of $n$, which is perfectly understood, and so combinatorial interest is greater for $\nu_k(n)$ when $k>1$.

These functions have been studied by Major P. A. MacMahon \cite{MacMahon}, George Andrews \cite{GEA1}, and Tani and Bouroubi \cite{TandB}, the latter specifically interested in $\nu_2$.  Their generating functions are $$\sum_{n=0}^\infty \nu_k(n) q^n = \sum_{1 \leq a_1 < a_2 \dots \leq a_k} \frac{q^{a_1 + \dots + a_k}}{(1-q^{a_1}) \dots (1-q^{a_k})}.$$  These functions are close cousins of a class of functions, defined in the same paper of MacMahon, that are an area of much recent activity: the quasi-modular forms given by the product-of-multiplicities functions  $${\mathcal{U}}_k = \sum_{1 \leq a_1 < a_2 \dots \leq a_k} \frac{q^{a_1 + \dots + a_k}}{(1-q^{a_1})^2 \dots (1-q^{a_k})^2}.$$

In (\cite{KeithRamanujan}, \cite{KeithIJNT}), the second author studied $\nu_2$ and $\nu_3$ and proved properties including several congruences, among which the example of interest in this paper is \begin{theorem}$$\nu_2(16n+14) \equiv 0 \pmod{4}.$$ \end{theorem}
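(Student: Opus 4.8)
The plan is to pass to the arithmetic encoding of the objects and then to build a free action of a small $2$-group whose orbits force the divisibility. A partition of $N$ into exactly two part sizes is the same datum as a quadruple $(a,b,i,j)$ with $a>b\ge 1$ and $i,j\ge 1$, recording $i$ copies of the larger part $a$ and $j$ copies of the smaller part $b$, subject to $ia+jb=N$. Thus $\nu_2(N)$ is the number of such quadruples, and I would work entirely with this lattice-point description with $N=16n+14$.

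First I would extract one factor of $2$ from conjugation. Conjugating the partition $(a^i,b^j)$ sends it to $((i+j)^{\,b},\, i^{\,a-b})$, i.e. the quadruple map
\[
\sigma:(a,b,i,j)\longmapsto (i+j,\ i,\ b,\ a-b),
\]
which is well defined on our set because $a>b$ forces $a-b\ge 1$ and $i+j>i$, so the image again has exactly two distinct part sizes; and $\sigma$ is an involution since conjugation is. A fixed point requires $i=b$ and $a=b+j$, whence $N=ia+jb=b(b+2j)$. As $b+2j\equiv b \pmod 2$, this product is odd when $b$ is odd and divisible by $4$ when $b$ is even; but $N=16n+14\equiv 2\pmod 4$ is neither, so $\sigma$ has no fixed point. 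Hence the partitions counted by $\nu_2(16n+14)$ fall into conjugate pairs, giving $\nu_2(16n+14)\equiv 0 \pmod 2$ at once.

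Second I would pair the conjugate pairs. It suffices to produce an involution $\tau$ commuting with $\sigma$ for which $\tau$ and $\sigma\tau$ are also fixed-point-free; then $\langle \sigma,\tau\rangle\cong(\mathbb{Z}/2)^2$ acts with trivial stabilizers, every orbit has size $4$, and $4\mid \nu_2(16n+14)$. To construct $\tau$ I would exploit the $2$-adic shape of the equation: since $N=2(8n+7)$ with $8n+7$ odd, one has $v_2(ia+jb)=1$, so the single available factor of $2$ sits on exactly one ``side,'' and a Glaisher-type exchange that moves this factor between a part and its multiplicity is the natural candidate for the second involution. I would split the quadruples into subclasses according to the parities of $a,b,i,j$ (equivalently, where the factor of $2$ lives), define $\tau$ case by case so as to commute with $\sigma$, and check freeness on each piece.

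The main obstacle is exactly this second step: arranging $\tau$ to commute with $\sigma$ while staying fixed-point-free on every subclass, because the boundary cases where the naive exchange collides with its conjugate need separate treatment. Here I expect the divisor function to enter, as the abstract foretells: the residual ``degenerate'' subclass should be enumerated by a divisor-type sum comparable to $d(16n+14)$, and this count is itself divisible by $4$ for a clean reason. Indeed $d(16n+14)=2\,d(8n+7)$ since $8n+7$ is odd, and $8n+7\equiv 7\pmod 8$ is never a perfect square (squares are $\equiv 0,1,4\pmod 8$), so $d(8n+7)$ is even and $4\mid d(16n+14)$. Verifying that the residual class is counted by such a sum, and that the complementary classes are genuinely partitioned into $4$-orbits with no overlap or miscount, is where the real work lies.
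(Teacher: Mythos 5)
Your first step is sound and coincides with the paper's own: conjugation $\sigma$ is fixed-point-free on partitions of $N\equiv 2\pmod 4$ into two part sizes (your computation $N=b(b+2j)$, a difference of squares, is exactly the paper's argument), giving $\nu_2(16n+14)\equiv 0\pmod 2$. Your group-theoretic framing is also valid in principle: if an involution $\tau$ commuting with $\sigma$ existed with $\tau$ and $\sigma\tau$ both fixed-point-free, then every orbit of $\langle\sigma,\tau\rangle\cong(\mathbb{Z}/2)^2$ would have size $4$ and the theorem would follow. But you never construct $\tau$: everything after ``the natural candidate'' is a plan, and you yourself identify the construction and the fixed-point checks as ``where the real work lies.'' Since that is precisely the content of the theorem --- the $\pmod 2$ statement was already known to be elementary, and the paper exists to supply the missing $\pmod 4$ combinatorics --- the proposal has a genuine gap rather than a complete argument.

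Two concrete warnings about the route you sketch. First, your heuristic that $v_2(ia+jb)=1$ places the factor of $2$ on ``exactly one side'' fails on the class where $a,b,i,j$ are all odd: there both products are odd and the $2$ comes from their sum. This all-odd class ($OOOO$ in the paper's notation) is exactly where the natural Glaisher-type exchange --- the paper's map $\rho$, swapping the largest odd divisor of each part with that of its multiplicity --- has genuine fixed points even for $N=16n+14$, e.g.\ $7^11^7$ with $N=14$. The paper must route around these by conjugating a restricted subclass of $EOOE$ (those with $\lambda_1\neq 2\lambda_2$) into $OOOO$, applying $\rho$, and conjugating back, and the fixed-point-freeness there rests on two arithmetic facts specific to the progression: $\frac{N}{2}\equiv 7\pmod 8$ is not a sum of two powers of two (which rules out $\ell(\lambda_i)\equiv\ell(m_i)\pmod 8$ for both pairs), and $N\equiv 6\pmod 8$ is not a sum of two odd squares. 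Your proposal contains neither fact; the only mod-$16$-specific input you do use ($8n+7$ is not a square, so $4\mid d(16n+14)$) belongs to the paper's supplementary section on subclass sizes, not to the engine of its proof. Second, there is reason to doubt that a single $\tau$ of the simple exchange type commuting with $\sigma$ exists at all: the paper's $\rho$ does not commute with conjugation, and the published proof is not a free $(\mathbb{Z}/2)^2$ action --- it proves evenness of the union $EOOE+EEOE+OEEE+OEEO$ via $\rho$, doubles by pairing with the conjugate classes, and disposes of the residual classes $EOEE+OEOE+OEEO$ by a separate cardinality argument combining the map $\overline{\phi}$ with $\textrm{Conj}\circ\rho\circ\textrm{Conj}$ on restricted subsets. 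Completing your plan would require essentially rediscovering this parity-class case analysis, so the gap is not a routine verification but the substance of the theorem.
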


That $\nu_2(16n+14) \equiv 0 \pmod{2}$ follows immediately from simple combinatorial argument: a self-conjugate partition into two part sizes must be a difference of squares, which cannot be $2 \pmod{4}$.  The proof of the congruence modulo 4, in contrast, required the use of modular forms.  A combinatorial proof seemed that it should be possible and desirable.  Such a proof is provided here.  In the process, we make several refinements of parity to subsets of the partitions being considered.

The next section includes sufficient background to make the paper self-contained.  The following section then produces the combinatorial proof of the theorem.  Section \ref{Dn} contains certain results of interest on the sizes of individual subclasses which were tangential to the main proof, and we conclude in the final section with a few additional remarks.

\section{Background}

The \emph{Ferrers diagram} for a partition is the unique top-left justified grid of boxes where each part corresponds to a row of boxes. For instance, the Ferrers diagram for the partition $(5,4,1)$ is
\[\young(~~~~~,~~~~,~)\]
The involution of \emph{conjugation} on partitions is reflection across the main diagonal of the Ferrers diagram.  We say two partitions are \textit{conjugates} of one another if they differ by a reflection across the main diagonal. For instance, we say that $(5,4,1)$ and $(3,2,2,2,1)$ are conjugate. A partition is called \textit{self-conjugate} if it is a fixed point of this operation, i.e. if its Ferrers diagram is symmetric across the diagonal. For instance, $(4,2,1,1)$ with Ferrers diagram \[\young(~~~~,~~,~,~)\] is self-conjugate.

In the literature, the conjugate of the partition $\lambda$ is normally denoted by $\overline{\lambda}$ or $\lambda^\prime$.  Since we will need to employ conjugation as one of a sequence of bijections, we will write this operation as the function $Conj(\lambda)$.

For any partition of $n$, let $\lambda_i$ denote the $i$-th largest part size and let $m_i$ be its multiplicity. Then for any partition counted by $\nu_k(n)$, we may write $n=\sum_{i=1}^k\lambda_im_i$. It will be helpful to write partitions using the frequency notation $(\lambda_1^{m_1}\lambda_2^{m_2}\cdots \lambda_k^{m_k})$. For instance, the partitions counted by $\nu_2(6)$ are written
$$(5^11^1),\quad (4^12^1),\quad (4^11^2),\quad (3^11^3),\quad (2^21^2),\quad (2^11^4).$$

It may now be immediately observed that if a partition into two part sizes is self-conjugate, its Ferrers diagram is literally a geometric difference of squares, i.e. a square with a square removed from its lower right corner, and hence $\lambda$ must partition a difference of squares, $x^2 - y^2$; but since $x^2 - y^2$ is never $2 \pmod{4}$, no partition of $4n+2$ into 2 part sizes can be self-conjugate, and so conjugation is a matching without fixed points on this set.  Hence $\nu_2(4n+2) \equiv 0 \pmod{2}$.

We can write an explicit formula for the conjugate of a partition into exactly two part sizes:

\begin{lemma}\label{conjlemma} \begin{equation}
    \textrm{Conj}(\lambda_1^{m_1}\lambda_2^{m_2})=(m_1 + m_2)^{(\lambda_2)}(m_1)^{(\lambda_1-\lambda_2)}.
    \label{conj}
\end{equation}
\end{lemma}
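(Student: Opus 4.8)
The plan is to read off the conjugate directly from the Ferrers diagram, using the standard fact that the $j$-th part of $\textrm{Conj}(\lambda)$ equals the number of parts of $\lambda$ of size at least $j$, i.e., the height of the $j$-th column of the diagram. So the whole statement reduces to counting column heights.

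First I would draw the diagram of $(\lambda_1^{m_1}\lambda_2^{m_2})$. By the convention that $\lambda_1 > \lambda_2$ (as $\lambda_i$ is the $i$-th largest part size), the left-justified diagram consists of $m_1$ rows of length $\lambda_1$ stacked on top of $m_2$ rows of length $\lambda_2$. Next I would partition the columns into two ranges. A column in position $j$ with $1 \le j \le \lambda_2$ is met by all $m_1 + m_2$ rows, so it has height $m_1 + m_2$, and there are exactly $\lambda_2$ such columns. A column in position $j$ with $\lambda_2 < j \le \lambda_1$ is met only by the $m_1$ longer rows, so it has height $m_1$, and there are exactly $\lambda_1 - \lambda_2$ such columns. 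Reading these heights in weakly decreasing order (note $m_1 + m_2 > m_1$ since $m_2 \ge 1$) and converting to frequency notation yields $(m_1 + m_2)^{(\lambda_2)}(m_1)^{(\lambda_1 - \lambda_2)}$, which is exactly the claimed formula.

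There is no real obstacle here; once the diagram is drawn, the count is routine. The only points meriting a moment of care are the ordering convention, which fixes which block of rows is the longer one, and the verification that the output is genuinely a partition into two sizes. The latter holds because $\lambda_2 \ge 1$ forces the larger height $m_1 + m_2$ to occur, $\lambda_1 - \lambda_2 \ge 1$ forces the smaller height $m_1$ to occur, and $m_2 \ge 1$ makes the two heights distinct. This also recovers the expected consistency that conjugation is an involution on the set of partitions into exactly two part sizes, since applying the formula twice returns the original partition.
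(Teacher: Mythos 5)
Your proof is correct: the paper states Lemma \ref{conjlemma} without proof, treating it as a routine consequence of the Ferrers diagram, and your column-height count (columns $1$ through $\lambda_2$ of height $m_1+m_2$, columns $\lambda_2+1$ through $\lambda_1$ of height $m_1$) is exactly the standard verification the paper leaves implicit. Your added checks --- the ordering $m_1+m_2 > m_1$ and that the output genuinely has two distinct part sizes --- are sound and, if anything, slightly more careful than the paper requires.
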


Write any positive integer $m$ uniquely as $m=2^k t$ with $t$ odd. We will denote by $k=b(m)$ the 2-adic valuation of $m$ and $t=\ell(m)$ the largest odd divisor of $m$.

\section{Combinatorial proof of the main theorem}

The strategy of the proof is as follows.  We begin by establishing several structural properties of the set of partitions under consideration.  We next define involutions which, thanks to the relevant properties, will lack fixed points, showing an exhaustive collection of subsets each contain an even number of conjugate pairs.

We first distinguish several subsets by the parities of their sizes and their respective multiplicities.  Let $(\lambda_1^{m_1}\lambda_2^{m_2})$ be called a member of the \emph{parity class} $ABCD$ in which $A, B, C, D \in \{O, E\}$ depending on whether $\lambda_1, m_1,\lambda_2, m_2$ respectively are odd or even. For instance, consider the partition $(6^14^2)$. Since the elements of the list $(6,1,4,2)$ are respectively even, odd, even, and even, we have $(6^14^2)\in EOEE$.  

The nonempty parity classes are enumerated in the following lemma.

\begin{lemma}\label{classes} Partitions of $4n+2$ into two part sizes can only be of types $OOOO$, $EOOE$, $EEOE$, $EOEO$, $OEEE$, $EEEO$, $EOEE$, $OEOE$, and $OEEO$.  Furthermore, for partitions $(\lambda_1^{m_1} \lambda_2^{m_2})$ not in the class $OOOO$, exactly one of the two products has $\lambda_i m_i \equiv 2 \pmod{4}$, and the other has $\lambda_j m_j \equiv 0 \pmod{4}$. 
\end{lemma}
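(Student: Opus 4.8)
The plan is to read both conclusions off the single congruence $\lambda_1 m_1 + \lambda_2 m_2 = 4n+2 \equiv 2 \pmod 4$ by a case analysis on the residues of the two products $P_1 = \lambda_1 m_1$ and $P_2 = \lambda_2 m_2$ modulo $4$. First I would record the elementary fact that a product $\lambda_i m_i$ is odd precisely when both factors are odd (pair pattern $OO$); is $\equiv 2 \pmod 4$ precisely when exactly one factor is even and that factor is itself $\equiv 2 \pmod 4$ (so the pattern is $OE$ or $EO$); and is $\equiv 0 \pmod 4$ in every remaining even case (pattern $EE$, or a mixed pattern whose even factor is divisible by $4$). The key point to flag here is that the parity class alone does \emph{not} pin down $P_i \bmod 4$ for the mixed patterns $OE$ and $EO$; only the congruence on the total will force the mod-$4$ split, which is exactly why the argument must track residues and not merely parities.

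Next I would solve $P_1 + P_2 \equiv 2 \pmod 4$ over $\{0,1,2,3\}$, obtaining exactly the four ordered residue pairs $(P_1,P_2) \in \{(1,1),(3,3),(0,2),(2,0)\}$. The two odd--odd solutions $(1,1)$ and $(3,3)$ force both factor-pairs to be $OO$, i.e.\ the class $OOOO$. The two remaining solutions $(0,2)$ and $(2,0)$ give precisely the ``furthermore'' statement: both products are even and exactly one is $\equiv 2$ while the other is $\equiv 0 \pmod 4$. In particular this already rules out the six classes in which exactly one factor-pair is $OO$, since such a class produces one odd and one even product, hence an odd total, contradicting that $4n+2$ is even.

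Finally I would eliminate $EEEE$ and collect the survivors. In a both-even solution the product that is $\equiv 2 \pmod 4$ carries exactly one factor of $2$, so its pair must be mixed ($OE$ or $EO$) and in particular cannot be $EE$; therefore the two pairs cannot both be $EE$, which discards $EEEE$ (there both products would be $\equiv 0$ and the total $\equiv 0 \pmod 4$). Since neither pair is $OO$, each pair ranges over $\{OE, EO, EE\}$, giving $3 \times 3 = 9$ patterns, and removing $EEEE$ leaves the eight listed mixed classes; together with $OOOO$ these are the nine asserted types. I expect no genuine obstacle beyond bookkeeping, the only subtlety being the mod-$4$ (rather than mod-$2$) behaviour of the mixed pairs noted above. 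To match the prose claim that these classes are nonempty, I would close by exhibiting one explicit partition of a suitable $4n+2$ realizing each of the nine types.
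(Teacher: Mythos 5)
Your proposal is correct and takes essentially the same approach as the paper's own proof: a mod-$2$/mod-$4$ analysis of the two products $\lambda_i m_i$, excluding the six classes with exactly one $OO$ pair by parity of the total, excluding $EEEE$ because both products would be $\equiv 0 \pmod{4}$, and reading the ``furthermore'' clause off the only even solutions $(0,2)$ and $(2,0)$ of $P_1+P_2 \equiv 2 \pmod{4}$. You are simply more systematic than the paper's terse two-sentence argument, and your closing nonemptiness check is a harmless extra (the lemma itself asserts only the ``can only be'' direction; nonemptiness is claimed in the surrounding prose).
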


\begin{proof} The seven excluded classes are $EEEE$, or those in which one pair of symbols is $OO$ and the other not.  It is not possible for one pair to contribute an odd amount to the sum, and the other an even amount, nor for both pairs to contribute $0 \pmod{4}$.

In $OOOO$, each pair contributes $1 \pmod{2}$.  All remaining parity classes have each pair contribute an even amount, but these cannot be both $0 \pmod{4}$, nor both $2 \pmod{4}$, either of which would sum to $0 \pmod{4}$.
\end{proof}

An immediate consequence of Lemma \ref{conjlemma} is that these classes conjugate between themselves, i.e. elements of one class always conjugate to another, except for the class $OEEO$, which is self-conjugate - the partitions as a class, not individually.

\begin{table}[h]
    \begin{tabular}{c  c}
        $EOOE$ & $OOOO$ \\ \hline
        $EEOE$  & $EOEO$ \\ \hline
        $OEEE$  & $EEEO$ \\ \hline
        $EOEE$  & $OEOE$ \\ \hline
        $OEEO$ & self\\
    \end{tabular}
    \vskip.1in
    \caption{The nonempty parity classes for partitions counted by $\nu_2(4n+2)$, paired by conjugation.}
    \label{parities}
\end{table}

Per Lemma \ref{classes}, if a partition is not in $OOOO$, we use an overline to denote which part-multiplicity pair contributes $2\pmod 4$. For instance, $(6^11^8)\in \overline{EO}OE$ and $EOOE=\overline{EO}OE \cup EO\overline{OE}$.

\begin{lemma}\label{oeeolemma} We have that $|OEEO|$ is always even. Moreover, $|\overline{OE}EO|=|OE\overline{EO}|$. 
\end{lemma}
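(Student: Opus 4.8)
The plan is to use conjugation itself as the matching. Lemma \ref{conjlemma} gives the explicit formula $\textrm{Conj}(\lambda_1^{m_1}\lambda_2^{m_2})=(m_1+m_2)^{(\lambda_2)}(m_1)^{(\lambda_1-\lambda_2)}$, and we have already observed that no partition of $4n+2$ into two part sizes is self-conjugate, so conjugation restricted to any subset closed under it is a fixed-point-free involution, pairing elements two at a time. Thus the strategy splits into two tasks: first, verify that $OEEO$ is closed under conjugation, which already forces $|OEEO|$ to be even; and second, track which of the two part--multiplicity pairs carries the factor $\equiv 2 \pmod 4$, showing that conjugation interchanges the two overlined subclasses and hence $|\overline{OE}EO|=|OE\overline{EO}|$.

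For the first task I would apply the conjugation formula to a generic element of $OEEO$, where $\lambda_1$ is odd, $m_1$ even, $\lambda_2$ even, and $m_2$ odd. Reading off parities of the conjugate, the new largest part $m_1+m_2$ is odd, its multiplicity $\lambda_2$ is even, the new smaller part $m_1$ is even, and its multiplicity $\lambda_1-\lambda_2$ is odd; hence the conjugate again lies in $OEEO$. Since conjugation is an involution with no fixed points on this set, it partitions $OEEO$ into conjugate pairs, so $|OEEO|$ is even.

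For the refinement, I would first record that within $OEEO$ the condition $\overline{OE}EO$ (first pair $\equiv 2 \pmod 4$) is equivalent, since $\lambda_1$ and $m_2$ are odd, to $m_1 \equiv 2 \pmod 4$, forcing $\lambda_2 \equiv 0 \pmod 4$; symmetrically $OE\overline{EO}$ is equivalent to $\lambda_2 \equiv 2 \pmod 4$, forcing $m_1 \equiv 0 \pmod 4$. I would then evaluate the two products of the conjugate, namely $(m_1+m_2)\lambda_2$ and $m_1(\lambda_1-\lambda_2)$, under each hypothesis. Starting from $\overline{OE}EO$, the condition $\lambda_2 \equiv 0 \pmod 4$ gives $(m_1+m_2)\lambda_2 \equiv 0$, while $m_1 \equiv 2$ together with $\lambda_1-\lambda_2$ odd gives $m_1(\lambda_1-\lambda_2) \equiv 2 \pmod 4$, placing the conjugate in $OE\overline{EO}$; the reverse computation is entirely analogous. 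This exhibits conjugation as a fixed-point-free bijection between $\overline{OE}EO$ and $OE\overline{EO}$, yielding their equality and reconfirming that $|OEEO| = 2|\overline{OE}EO|$ is even.

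The only real obstacle is the bookkeeping in the refinement: one must be careful that the products being tested for divisibility after conjugation are the conjugate's pairs $(m_1+m_2,\lambda_2)$ and $(m_1,\lambda_1-\lambda_2)$, not the original pairs, and that the residues $\pmod 4$ propagate correctly through the various odd/even combinations. Everything else is an immediate consequence of Lemmas \ref{conjlemma} and \ref{classes}.
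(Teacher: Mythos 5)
Your proposal is correct and follows essentially the same route as the paper: conjugation is a fixed-point-free involution preserving $OEEO$ (giving evenness), and the explicit formula of Lemma \ref{conjlemma} shows the roles of $m_1$ and $\lambda_2$ are exchanged, so the unique entry that is $2 \pmod 4$ moves between the two overlined subclasses. Your residue computations merely spell out in detail what the paper compresses into the observation that conjugation ``swaps $m_1$ and $\lambda_2$'' while the other two entries stay odd.
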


\begin{proof} No partition under consideration is self-conjugate but the conjugates of a partition in $OEEO$ are also in $OEEO$, hence $|OEEO|$ is even.  Lemma \ref{conjlemma} reveals that conjugation will swap $m_1$ and $\lambda_2$.  Since the other elements of the partition are odd, exactly one of $m_1$ and $\lambda_2$ is divisible by 4 and the other is $2 \pmod{4}$.  Hence $|\overline{OE}EO|=|OE\overline{EO}|$.
\end{proof}

In order to avoid visual clutter, for the remainder of the paper we suppress the use of $| \cdot |$ for set size and $( \cdot )$ for writing partitions.

\begin{lemma}\label{odd pairs}
    If a partition $\lambda_1^{m_1}\lambda_2^{m_2}$ is counted by $\nu_2(16n+14)$, then at least one of the part-multiplicity pairs has $\ell(\lambda_i)\not\equiv\ell(m_i)\pmod 8$.
\end{lemma}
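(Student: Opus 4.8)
The plan is to argue by contradiction and reduce the whole statement to a single congruence computation modulo $16$. The arithmetic fact I would lean on is that $16n+14 = 2(8n+7)$ has $2$-adic valuation exactly $1$ and satisfies $16n+14 \equiv 14 \pmod{16}$; the value $14$ will turn out to be unreachable once we assume both pairs have matching odd parts. So I would suppose, toward a contradiction, that $\ell(\lambda_1) \equiv \ell(m_1) \pmod 8$ \emph{and} $\ell(\lambda_2) \equiv \ell(m_2) \pmod 8$, and then show $\lambda_1 m_1 + \lambda_2 m_2$ cannot be $\equiv 14 \pmod{16}$.

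The first step is the crux: matching odd parts collapse each product to a near-trivial residue. If $\ell(\lambda_i) \equiv \ell(m_i) \pmod 8$ then $\ell(\lambda_i)\ell(m_i) \equiv \ell(\lambda_i)^2 \equiv 1 \pmod 8$, since every odd square is $1 \pmod 8$. Writing $\lambda_i m_i = 2^{e_i}\,\ell(\lambda_i)\ell(m_i)$ with $e_i = b(\lambda_i)+b(m_i)$, I would read off each product modulo $16$: when $e_i = 0$ the product is odd and $\equiv 1 \pmod 8$, hence lies in $\{1,9\}\pmod{16}$; while for $e_i \ge 1$ the odd factor being $\equiv 1 \pmod 8$ forces $\lambda_i m_i \equiv 2^{e_i} \pmod{16}$ (interpreting $2^{e_i} \equiv 0$ once $e_i \ge 4$). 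Thus each product is pinned, essentially by its valuation alone, to one of a very short list of residues.

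The second step invokes Lemma \ref{classes} to determine the valuation structure and finish the case check. Either the partition lies in $OOOO$, where $e_1 = e_2 = 0$ and the two products sum to an element of $\{1,9\}+\{1,9\} = \{2,10\}\pmod{16}$; or it lies in one of the remaining classes, where by Lemma \ref{classes} exactly one pair contributes $\equiv 2 \pmod 4$ (so $e_i = 1$ and that product is $\equiv 2 \pmod{16}$) and the other contributes $\equiv 0 \pmod 4$ (so $e_j \ge 2$ and that product is $\equiv 4$, $8$, or $0 \pmod{16}$), giving a total in $\{6,10,2\}\pmod{16}$. In every case $\lambda_1 m_1 + \lambda_2 m_2 \not\equiv 14 \pmod{16}$, contradicting $\lambda_1 m_1 + \lambda_2 m_2 = 16n+14$, and the lemma follows.

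I expect the only real obstacle to be bookkeeping rather than any deep idea. Two points require care: one must work modulo $16$ and not merely modulo $8$ (mod $8$ alone does not separate $14$ from the achievable residues once a pair carries valuation $\ge 2$), and the class $OOOO$ must be treated separately since there both products are odd and Lemma \ref{classes}'s $2\pmod 4$/$0\pmod 4$ dichotomy does not apply. Once the observation that matching odd parts force each $\ell(\lambda_i)\ell(m_i)\equiv 1\pmod 8$ is in hand, all of the apparent freedom evaporates and the remaining verification is a short finite enumeration.
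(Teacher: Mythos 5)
Your proposal is correct and takes essentially the same approach as the paper: both argue by contradiction from the observation that matching odd parts force each $\ell(\lambda_i)\ell(m_i)\equiv 1\pmod 8$, treat $OOOO$ separately, and finish with a short residue check. The only difference is packaging --- the paper handles the even case uniformly by dividing by $2$ and noting that $n/2\equiv 7\pmod 8$ is not a sum of two powers of two, whereas you split the valuations via Lemma \ref{classes} and enumerate modulo $16$; both computations are equivalent.
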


\begin{proof}
    For sake of contradiction, assume $\ell(\lambda_1)\equiv\ell(m_1)$ and $\ell(\lambda_2)\equiv\ell(m_2)\pmod 8$.
    Recall that odd squares are $1\pmod 8$.
    If $\lambda_1^{m_1}\lambda_2^{m_2}\in OOOO$, then
    \[n=\lambda_1m_1+\lambda_2m_2=\ell(\lambda_1)\ell(m_1) + \ell(\lambda_2)\ell(m_2)\equiv 2\pmod 8.\]
    Otherwise, $\lambda_1m_1$ and $\lambda_2m_2$ are both even. Dividing by 2 reveals
    \begin{alignat*}{2}
        \frac{n}{2} &= 2^{b(\lambda_1)+b(m_1)-1}\ell(\lambda_1)\ell(m_1) + 2^{b(\lambda_2)+b(m_2)-1}\ell(\lambda_2)\ell(m_2)\\
        &\equiv 2^{b(\lambda_1)+b(m_1)-1} + 2^{b(\lambda_2)+b(m_2)-1} \pmod{8}.
    \end{alignat*}
    Recall the powers of two are $0,1,2,$ and $4\pmod 8$. Observe that $\frac{n}{2}\equiv 7\pmod 8$ cannot be written as a sum of two powers of two. Contradiction.
\end{proof}

We now begin defining our maps.

\begin{definition} Let $\lambda$ be a partition such that $2^{b(\lambda_1)} \ell (m_1) \neq  2^{b(\lambda_2)} \ell (m_2)$.  Define $\rho$ to be the map that swaps the largest odd divisor of each part with that of its multiplicity, and reorders the resulting parts if necessary.  That is,
\begin{multline*}  \rho (\lambda_1^{m_1}\lambda_2^{m_2}) = \rho \left( (2^{b(\lambda_1)} \ell(\lambda_1))^{(2^{b(m_1)} \ell(m_1))} (2^{b(\lambda_2)} \ell(\lambda_2))^{(2^{b(m_2)} \ell(m_2))} \right) \\ 
=     \begin{cases} 
      (2^{b(\lambda_1)} \ell(m_1))^{(2^{b(m_1)} \ell(\lambda_1))} (2^{b(\lambda_2)} \ell(m_2))^{(2^{b(m_2)} \ell(\lambda_2))} & 2^{b(\lambda_1)} \ell(m_1) > 2^{b(\lambda_2)} \ell(m_2) \\
      (2^{b(\lambda_2)} \ell(m_2))^{(2^{b(m_2)} \ell(\lambda_2))} (2^{b(\lambda_1)} \ell(m_1))^{(2^{b(m_1)} \ell(\lambda_1))}  & 2^{b(\lambda_1)} \ell(m_1) < 2^{b(\lambda_2)} \ell(m_2). \\
   \end{cases}
\end{multline*}
\end{definition}

For instance,
    \[\rho(4^63^2)=\rho\left((2^2*1)^{(2^1*3)}(2^0*3)^{(2^1*1)}\right)=(2^2*3)^{(2^1*1)}(2^0*1)^{(2^1*3)}=12^21^6.\]
    
This map gives us evenness of a collection of parity classes.

\begin{corollary}\label{even parities} If $n\equiv 14\pmod {16}$, then $$EOOE+EEOE+OEEE+OEEO\equiv 0\pmod 2.$$
\end{corollary}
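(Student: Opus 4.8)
The plan is to show that $\rho$ restricts to a fixed-point-free involution on the union $EOOE \cup EEOE \cup OEEE \cup OEEO$, from which the even size of this union, and hence the claimed congruence, follows at once. First I would record the basic features of $\rho$. Writing each entry as $2^b\ell$, the map swaps the odd parts $\ell(\lambda_i)$ and $\ell(m_i)$ inside each part--multiplicity pair while leaving the valuations $b(\lambda_i),b(m_i)$ attached to their slots, and then sorts. Since $\lambda_i m_i = 2^{b(\lambda_i)+b(m_i)}\ell(\lambda_i)\ell(m_i)$ is unchanged by the swap, $\rho$ preserves $n$ and sends a partition counted by $\nu_2(16n+14)$ to another such partition; and since swapping the odd parts twice restores the original, $\rho$ is an involution wherever it is defined.

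Next I would check that $\rho$ is defined on, and stabilizes, the four classes. In each of $EOOE,EEOE,OEEE,OEEO$ the two part sizes $\lambda_1$ and $\lambda_2$ have opposite parity, so the two candidate new parts $2^{b(\lambda_1)}\ell(m_1)$ and $2^{b(\lambda_2)}\ell(m_2)$ also have opposite parity and are therefore distinct; this is exactly the hypothesis $2^{b(\lambda_1)}\ell(m_1)\neq 2^{b(\lambda_2)}\ell(m_2)$ needed for $\rho$ to be defined. Because $\rho$ keeps the $b$-values in place, it preserves the parity type $(\text{parity }\lambda_i,\text{parity }m_i)$ of each pair, so the only effect of the sorting step is to possibly interchange the two pairs. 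Thus $\rho$ acts on parity classes either as the identity or as $(A,B,C,D)\mapsto(C,D,A,B)$; the latter gives $EOOE\leftrightarrow OEEO$ and $EEOE\leftrightarrow OEEE$, so in every case the union of the four classes maps to itself.

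Finally I would rule out fixed points. If $\rho(\lambda)=\lambda$ without interchanging the pairs, then $\ell(\lambda_i)=\ell(m_i)$ for both $i$, contradicting Lemma \ref{odd pairs}. If instead a fixed point arose from interchanging the two pairs, then $2^{b(\lambda_1)}\ell(m_1)=\lambda_2$, which would force $\lambda_1$ and $\lambda_2$ to have the same parity, contradicting the opposite-parity observation above. Hence $\rho$ has no fixed points on the union, the union decomposes into $\rho$-orbits of size two, and its cardinality is even.

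The main obstacle I expect is the bookkeeping in this last fixed-point analysis: one must cleanly separate the ``sorted in place'' branch from the ``pairs interchanged'' branch and verify that Lemma \ref{odd pairs} eliminates the first while the opposite parity of $\lambda_1,\lambda_2$ eliminates the second. Everything else, namely preservation of $n$, the involution property, and stabilization of the four-class union, is a direct consequence of the single fact that $\rho$ moves only the odd parts and leaves the $2$-adic valuations fixed.
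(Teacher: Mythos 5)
Your proposal is correct and takes essentially the same route as the paper: both exploit that in these four classes $\lambda_1\not\equiv\lambda_2\pmod 2$, so $\rho$ is well-defined, and both rule out fixed points by splitting into the case where the pairs stay in place (eliminated by Lemma \ref{odd pairs}) and the case where they interchange (eliminated because the parity class, equivalently the parity of the part in each slot, would have to change). Your write-up simply makes explicit a few checks the paper leaves implicit, namely that $\rho$ preserves $n$, is an involution, and stabilizes the union of the four classes.
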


\begin{proof} By Table \ref{parities}, the parity classes listed are those with $\lambda_1\not\equiv\lambda_2\pmod 2$.  Hence $2^{b(\lambda_1)} \neq 2^{b(\lambda_2)}$, and $\rho$ is well-defined on these classes.

If $\rho$ preserved the parity of $\lambda_1$ and $\lambda_2$, the part sizes did not swap.  Then by Lemma \ref{odd pairs}, one of the part sizes changes, so $\lambda$ is not fixed by $\rho$. Otherwise, $\rho$ swapped the parities of $\lambda_1$ and $\lambda_2$ and changed the parity class of the input partition.\\

Hence, $\rho$ has no fixed points in these parity classes.
\end{proof}

This now gives us 

\begin{lemma}\label{sixgroup}
    If $n\equiv 14\pmod {16}$, then $$EOOE+EEOE+OEEE+OOOO+EOEO+EEEO\equiv 0\pmod 4.$$
\end{lemma}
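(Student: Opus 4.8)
The plan is to collapse the six-term sum into twice a three-term sum that Corollary \ref{even parities} already certifies to be even. The engine is the conjugation pairing recorded in Table \ref{parities}, which matches each of the first three listed classes bijectively with one of the last three.

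Concretely, I would first extract from Table \ref{parities} the equalities of class sizes forced by conjugation. Since conjugation is an involution on the partitions of $4n+2$ into two part sizes and sends $EOOE$ to $OOOO$, $EEOE$ to $EOEO$, and $OEEE$ to $EEEO$ (and back again), restricting it to each class yields a bijection with its partner, so that
\[
EOOE = OOOO, \qquad EEOE = EOEO, \qquad OEEE = EEEO.
\]
Next I would invoke Corollary \ref{even parities}, which gives $EOOE + EEOE + OEEE + OEEO \equiv 0 \pmod 2$, together with Lemma \ref{oeeolemma}, which guarantees $OEEO$ is even. Subtracting the latter leaves $EOOE + EEOE + OEEE \equiv 0 \pmod 2$.

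Assembling these, the target sum rewrites using the three equalities as
\[
(EOOE + EEOE + OEEE) + (OOOO + EOEO + EEEO) = 2\,(EOOE + EEOE + OEEE),
\]
and because the inner sum is even, the whole expression is divisible by $4$. I do not expect a genuine obstacle here: the argument is entirely a bookkeeping step exploiting the conjugation symmetry of Table \ref{parities} and the parity supplied by Corollary \ref{even parities}. The one point deserving care is confirming that the entries of Table \ref{parities} genuinely equate the \emph{sizes} of the paired classes, rather than merely relating them class-to-class; this holds because conjugation restricts to a size-preserving bijection between each pair.
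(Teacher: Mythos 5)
Your proof is correct and is essentially identical to the paper's: both subtract the even class $OEEO$ from Corollary \ref{even parities} to get $EOOE+EEOE+OEEE\equiv 0\pmod 2$, then use the conjugation pairings of Table \ref{parities} to double this sum. The only difference is that you spell out the size equalities explicitly, which the paper leaves implicit.
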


\begin{proof}
    Recall that $OEEO$ is even. Combining with Corollary \ref{even parities} shows that $EOOE+EEOE+OEEE$ is even. The other classes listed in the lemma are the conjugates of these three classes, and hence are of equal size.  The total size of all six classes is thus divisible by 4.
\end{proof}

We now handle the remaining classes by proving the following lemma.

\begin{lemma}
    If $n\equiv 6\pmod 8$, then $$EOEE+OEOE+OEEO \equiv 0\pmod 4.$$
\end{lemma}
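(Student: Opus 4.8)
The plan is to collapse the three-class statement to a single parity identity and then attack that identity with a fixed-point-free involution. By Table~\ref{parities} the classes $EOEE$ and $OEOE$ are interchanged by conjugation, so $EOEE = OEOE$, and by Lemma~\ref{oeeolemma} we have $OEEO = \overline{OE}EO + OE\overline{EO} = 2\,\overline{OE}EO$. Hence
\[ EOEE + OEOE + OEEO = 2\bigl(EOEE + \overline{OE}EO\bigr), \]
so the assertion is equivalent to the congruence $EOEE \equiv \overline{OE}EO \pmod 2$. I would emphasize at the outset that this is a genuine congruence and \emph{not} an equality: for $n=14$ one has $EOEE = 3$ (the partitions $10^12^2,\ 6^12^4,\ 6^14^2$) while $\overline{OE}EO = 1$ (the partition $5^24^1$). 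Since the two classes are not equinumerous, no class-to-class bijection can exist, and the correct object to build is a fixed-point-free involution on the \emph{disjoint union} $V := EOEE \sqcup \overline{OE}EO$.

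To set up the involution I would first record the common structure. Every partition in $V$ has one part-multiplicity pair of product $\equiv 2 \pmod 4$ (the \emph{light} pair, of total $2$-adic valuation exactly $1$) and one of product $\equiv 0 \pmod 4$ (the \emph{heavy} pair, of total $2$-adic valuation $P \ge 2$); in both classes the large part sits in the light pair. Writing each slot as $2^{b}\cdot(\mathrm{odd})$, the valuation vector $(b(\lambda_1),b(m_1),b(\lambda_2),b(m_2))$ is $(1,0,\ge 1,\ge 1)$ for $EOEE$ and the forced vector $(0,1,P,0)$ for $\overline{OE}EO$. In either shape $n = 2\,\ell(\lambda_1)\ell(m_1) + 2^{P}\,\ell(\lambda_2)\ell(m_2)$, so the hypothesis $n \equiv 6 \pmod 8$ imposes the single correlation
\[ P = 2 \iff \ell(\lambda_1)\ell(m_1)\equiv 1 \pmod 4, \qquad P \ge 3 \iff \ell(\lambda_1)\ell(m_1)\equiv 3 \pmod 4, \]
by the same power-of-two bookkeeping used in Lemma~\ref{odd pairs}. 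This correlation is the only place the residue of $n$ enters, and it is what should ultimately exclude fixed points.

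The involution itself would be built by redistributing $2$-adic content between and within the two pairs, in the spirit of the map $\rho$: partitions whose heavy pair admits a nontrivial permutation of its valuations (and of its odd entries) pair off among themselves inside $EOEE$, while the rigid cases, in which the heavy content is driven onto a single admissible slot, are what must be matched across $EOEE$ and $\overline{OE}EO$. A warning I would keep in view is that one cannot simply fix the four odd parts and move only the $2$'s: for $n=30$ the odd-part tuple $(11,1,1,1)$ already contributes an \emph{odd} count of three members of $V$, namely $22^12^4,\ 22^14^2,\ 11^28^1$, so any working involution must mix different odd-part tuples rather than act tuple-by-tuple.

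The hard part will be designing this global pairing so that it simultaneously stays inside $V$ and is genuinely fixed-point-free. Keeping the image inside $V$ is delicate because the ordering requirement $\lambda_1 > \lambda_2$ is exactly what ejects a redistributed partition into an adjacent class such as $EOOE$ or $EEEO$ — precisely the escape one sees when $\rho$ or the naive valuation-transfer is applied to these classes. Freeness, in turn, must be deduced from the weaker correlation above (which uses only $n \equiv 6 \pmod 8$) rather than from the sharper input $n \equiv 14 \pmod{16}$ that powered Lemma~\ref{odd pairs}. Reconciling the ordering constraint with the involution property, and extracting freeness from the $P$-versus-$\ell(\lambda_1)\ell(m_1)$ dichotomy, is where I expect essentially all of the difficulty to concentrate.
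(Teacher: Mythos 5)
Your opening reduction is correct and is in fact exactly the paper's first step: conjugation gives $EOEE = OEOE$, Lemma~\ref{oeeolemma} gives $OEEO = 2\,\overline{OE}EO$, and so the lemma is equivalent to $EOEE + \overline{OE}EO \equiv 0 \pmod 2$. Your structural bookkeeping is also sound — the valuation vectors $(1,0,\ge 1,\ge 1)$ and $(0,1,P,0)$ are right, the correlation between $P$ and $\ell(\lambda_1)\ell(m_1) \bmod 4$ follows as you say, and both of your cautionary examples check out ($EOEE=3$, $\overline{OE}EO=1$ at $n=14$; the odd tuple $(11,1,1,1)$ contributing the odd count $\{22^12^4,\,22^14^2,\,11^28^1\}$ at $n=30$). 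But the proposal stops precisely where the proof has to begin: no involution on $EOEE \sqcup \overline{OE}EO$ is ever defined, and you concede that ``essentially all of the difficulty'' lies in the construction you have not carried out. Correct constraints on a hypothetical pairing — it must mix odd-part tuples, respect $\lambda_1 > \lambda_2$, and extract freeness from the $P$-dichotomy — do not establish that such a pairing exists. As written, the central claim is unproved.

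For comparison, the paper never builds a single global involution on that union; it dodges both obstacles you flag by a cancellation-plus-transfer argument. It decomposes $OEOE = \overline{OE}OE + OE\overline{OE}$, splits the pieces by $\lambda_1 \lessgtr 2\lambda_2$, and applies the valuation-swap $\overline{\phi}$ to carry them bijectively onto $\{\overline{EO}OE : \lambda_1 > 2\lambda_2\}$, $\{\overline{EO}OE : \lambda_1 < 2\lambda_2\}$, and $OE\overline{EO}$; the last summand cancels against $\overline{OE}EO$ modulo $2$ since those two sets are equinumerous by Lemma~\ref{oeeolemma}. What remains is to show each $\overline{EO}OE$ piece is individually even, which is done by the composite $\textrm{Conj}\circ\rho\circ\textrm{Conj}$: conjugate into $OOOO$ (this is what mixes odd-part tuples automatically, and the split at $\lambda_1 = 2\lambda_2$ is exactly what keeps images in the correct subset), apply $\rho$, and conjugate back. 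Freeness comes from your correlation's arithmetic in concentrated form: a fixed point of $\rho$ in $OOOO$ would be $a^ab^b$, impossible since $n \equiv 6 \pmod 8$ is not a sum of two odd squares, or $a^bb^a$, impossible since membership of its conjugate in $\overline{EO}OE$ forces $a \equiv b \pmod 4$ and hence $n = 2ab \equiv 2 \pmod 8$. If you want to complete your version, this composite through $OOOO$, together with the $OE\overline{EO}$-versus-$\overline{OE}EO$ cancellation, is the missing construction; without something equivalent to it the argument does not go through.
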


\begin{proof}
    By conjugation, $OEOE=EOEE$. Hence it suffices to show that $OEOE+\frac{1}{2}OEEO$ is even. Since by conjugation, $\overline{OE}EO=OE\overline{EO}$, we aim to show that $OEOE + \overline{OE}EO$ is even.

    For the classes other than $OOOO$, define the map $\overline{\phi}$ to swap the 2-adic valuations of the part and multiplicity contributing $2\pmod 4$, and reorder the resulting parts if necessary. For instance,
    \[\overline{\phi}(3^21^{8})=\overline{\phi}\left((2^0*3)^{(2^1*1)}(2^0*1)^{(2^3*1)}\right)=(2^1*3)^{(2^0*1)}(2^0*1)^{(2^3*1)}=6^11^8.\]

    Observe the following equivalence for $OEOE$. In the final step, we apply $\overline{\phi}$ to each set.
    \begin{alignat*}{2}
       OEOE &= \overline{OE}OE + OE\overline{OE} \\ 
        &=\overline{OE}OE+ \{OE\overline{OE}:\lambda_1<2\lambda_2\}+ \{OE\overline{OE}:\lambda_1>2\lambda_2\}\\
        &= \{\overline{EO}OE: \lambda_1>2\lambda_2\} + \{\overline{EO}OE:\lambda_1<2\lambda_2\} + OE\overline{EO}.\\
    \end{alignat*}
    Since $OEEO$ is even, the count of partitions under consideration
    \begin{alignat*}{2}
        OEOE + \overline{OE}EO &= \{\overline{EO}OE: \lambda_1>2\lambda_2\} + \{\overline{EO}OE:\lambda_1<2\lambda_2\} + OE\overline{EO} + \overline{OE}EO\\
        &\equiv \{\overline{EO}OE: \lambda_1>2\lambda_2\} + \{\overline{EO}OE:\lambda_1<2\lambda_2\}\pmod 2.
    \end{alignat*}
    
    We now show that both of these subsets, individually, are of even cardinality. We require the following lemma showing that $\rho$ is well defined on the subset of $OOOO$ we need.

\begin{lemma} The conjugates of partitions in $\overline{EO}OE$ wherein $\lambda_1 \neq 2\lambda_2$ are those partitions in $OOOO$ where $m_1 \neq m_2$.
\end{lemma}

\begin{proof} The conjugate of a partition $\lambda_1^m \lambda_2^m \in OOOO$ is $(2m)^{\lambda_1 + \lambda_2} m^{\lambda_1}$.
\end{proof}

In fact, we can pair these partitions by first conjugating from $EOOE\rightarrow OOOO$, swapping all parts and multiplicities, then conjugating back from $OOOO\rightarrow EOOE$. 
    \[\overline{EO}OE \, \, {\phantom{.}}^{\underrightarrow{\textrm{Conj}}} \, \, OOOO \, \, {\phantom{.}}^{\underrightarrow{\rho}} \, \, OOOO \, \, {\phantom{.}}^{\underrightarrow{\textrm{Conj}}} \, \, \overline{EO}OE \]

    By Lemma \ref{conjlemma}, these maps can be explicitly written for partitions in $\overline{EO}OE$ such that $\lambda_1\neq 2\lambda_2$ by

    \[ (\textrm{Conj}\circ\rho\circ\textrm{Conj})(\lambda_1^{m_1}\lambda_2^{m_2})=
    \begin{cases} 
      (2m_1+m_2)^{\lambda_2}m_1^{(\lambda_1-2\lambda_2)} & \lambda_1 > 2\lambda_2 \\
      (2m_1+m_2)^{(\lambda_1-\lambda_2)}(m_1+m_2)^{(2\lambda_2-\lambda_1)} & \lambda_1 < 2\lambda_2.
   \end{cases}
\]

Thus $\textrm{Conj}\circ\rho\circ\textrm{Conj}$ preserves each set.  We now must confirm that it has no fixed points.  Since conjugation is a bijection which maps partitions from one parity class to another, it suffices to show that $\rho$ has no fixed points in $OOOO$ among the conjugates of partitions in $\{\lambda \in \overline{EO}OE: \lambda_1 \neq 2\lambda_2\}$.  (In general, fixed points such as $7^11^7$ do exist, even among partitions of $16n+14$.)

\begin{lemma} No conjugate of a partition in $\{\lambda \in \overline{EO}OE: \lambda_1 \neq 2\lambda_2\}$ is a fixed point of $\rho$.
\end{lemma}

\begin{proof}For sake of contradiction, suppose a partition in $\overline{EO}OE$ conjugated to a fixed point of $\rho$ in $OOOO$. Such a fixed point must look like $a^a b^b$ or $a^b b^a$, with $a$ and $b$ odd.

Since $n\equiv 6\pmod 8$ cannot be written as a sum of two odd squares, the problematic partition must have the latter form. Then by Lemma \ref{conjlemma},

    \[\textrm{Conj}(a^bb^a)=(a+b)^bb^{(a-b)}.\]

    Since $\textrm{Conj}(a^bb^a)\in\overline{EO}OE$, it follows that $a+b\equiv 2\pmod 4$. Having both $a,b$ odd requires that $a\equiv b\pmod 4$. However, this implies that $n=2ab\equiv 2\pmod 8$. Contradiction.
\end{proof}

Having shown that $OEOE + \overline{OE}EO$ is even, and knowing that $OEOE = EOEE$ by conjugation and likewise $\overline{OE}EO = OE\overline{EO}$, we have that $EOEE + OEOE + OEEO \equiv 0 \pmod{4},$ and the theorem is proved.
\end{proof}

\section{Subclass sizes}\label{Dn}

Andrews \cite{GEA1} and MacMahon \cite{MacMahon} both derived a compact formula for the value of $\nu_2(n)$, finding $$\nu_2(n) = \frac{1}{2} \left( \sum_{k=1}^{n-1} d(k)d(n-k) - \sigma_1(n) + d(n) \right)$$ where $d(n)$ and $\sigma_1(n)$ are the divisor function and sum of divisors function, respectively.  Since $8j+7$ is not a square, it is immediate that $d(16j+14)$ is divisible by 4, and that $\sigma_1(16j+14)$ is divisible by 8.

Lemma \ref{sixgroup} tells us that six of the subclasses together have a total cardinality divisible by 4, and hence the truth of the final result is equivalent to the same being true for the other three groups, $EOEE + OEOE + OEEO$.  We have the following fact, which we prove with another map.

\begin{lemma}\label{twiceOEOE} If $n \equiv 2 \pmod{4}$, then $$EOEO = OEOE + \frac{\sigma_1(n/2)}{2} - \frac{d(n)}{4}.$$
\end{lemma}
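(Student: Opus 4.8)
The plan is to reparametrize both classes so that the entire discrepancy collapses onto a single ``diagonal'' family, and then to match everything else by a coordinate transposition. Since $n \equiv 2 \pmod 4$, write $n = 2M$ with $M = n/2$ odd. A partition in $EOEO$ has both parts even, so halving each part gives a bijection onto the solutions of $L_1 m_1 + L_2 m_2 = M$ with $L_1 > L_2 \geq 1$ and $m_1, m_2$ odd; a partition in $OEOE$ has both multiplicities even, so halving each multiplicity gives a bijection onto the solutions of $\lambda_1 \mu_1 + \lambda_2 \mu_2 = M$ with $\lambda_1 > \lambda_2 \geq 1$ odd and $\mu_1, \mu_2 \geq 1$. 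Writing $A = EOEO$ and $B = OEOE$ for these counts, each object is an unordered pair of terms $\{(X_1,Y_1),(X_2,Y_2)\}$ summing to $M$; for $A$ the first coordinates are distinct and the second coordinates odd, while for $B$ the first coordinates are distinct odd numbers and the second coordinates are unrestricted.

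Next I would introduce the transposition $(X,Y) \mapsto (Y,X)$ applied to each term. Applied to an $A$-configuration with $m_1 \neq m_2$, it produces a $B$-configuration (the odd $m_i$ become the distinct odd parts, the distinct $L_i$ become the multiplicities), and conversely a $B$-configuration with $\mu_1 \neq \mu_2$ transposes back to an $A$-configuration. This yields a bijection between $\{A : m_1 \neq m_2\}$ and $\{B : \mu_1 \neq \mu_2\}$, so that $A - B = \#\{A : m_1 = m_2\} - \#\{B : \mu_1 = \mu_2\}$, reducing the whole problem to the two diagonal families.

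Then I would evaluate the two diagonal counts. For $B$, the condition $\mu_1 = \mu_2 = \mu$ forces $M = (\lambda_1 + \lambda_2)\mu$; but $\lambda_1 + \lambda_2$ is a sum of two odd numbers, hence even, while $M$ is odd, so this family is empty. For $A$, the condition $m_1 = m_2 = m$ forces $M = m(L_1 + L_2)$ with $m$ odd and $L_1 > L_2 \geq 1$; ranging $m$ over the (necessarily odd) divisors of $M$ and counting the $\lfloor (M/m - 1)/2 \rfloor = (M/m - 1)/2$ ways to split the odd quotient $M/m$ into two distinct positive summands gives $\tfrac12(\sigma_1(M) - d(M))$. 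Finally, since $n = 2M$ with $M$ odd yields $d(n) = 2d(M)$, I would rewrite $\tfrac12 d(M) = \tfrac14 d(n)$ and conclude $EOEO - OEOE = \tfrac12 \sigma_1(n/2) - \tfrac14 d(n)$, as claimed.

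The main obstacle is pinning down the transposition bijection exactly at the boundary: the defining feature of a partition into two sizes is that the two \emph{part sizes} are distinct, and after reparametrization this distinctness constraint sits on different coordinates in $A$ and in $B$. Tracking precisely which configurations fall outside the bijection—namely those with equal multiplicities—is what isolates the correction term, so the bookkeeping of the diagonal cases, together with the short parity argument that annihilates the $B$-diagonal, is where the real content of the lemma lies.
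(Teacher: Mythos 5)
Your proof is correct, and it takes a genuinely different route from the paper's, even though both reduce to the same divisor-sum count $\frac{1}{2}\left(\sigma_1(M)-d(M)\right)$. The paper defines a map $\tau\colon OEOE \to EOEO$ that swaps only the 2-adic valuations within each part--multiplicity pair, keeping both largest odd divisors in place; $\tau$ is injective, and the uncounted complement of its image is $\{\lambda \in EOEO : \ell(\lambda_1)=\ell(\lambda_2)\}$, whose members are then shown (using $n \equiv 2 \pmod 4$) to have the rigid form $(2^{1+t}k)^a(2k)^b$ before being counted. Your transposition, read back through the halving reparametrization, is instead the full part--multiplicity swap $\lambda_i^{m_i} \mapsto m_i^{\lambda_i}$ (up to the factor-of-2 bookkeeping), and the exceptional set it isolates is $\{\lambda \in EOEO : m_1 = m_2\}$ --- a literally different subset. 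One can see the two maps diverge already at $n=10$: there $EOEO = \{8^12^1,\, 6^14^1,\, 4^12^3\}$ and $OEOE=\{3^21^4\}$, and the paper's $\tau$ sends $3^21^4 \mapsto 6^14^1$, missing $8^12^1$ and $4^12^3$, while your map sends $3^21^4 \mapsto 4^12^3$, missing $8^12^1$ and $6^14^1$; both leftovers have size $2 = \frac{1}{2}(\sigma_1(5)-d(5))$. What your version buys is symmetry: the correspondence is a manifest involution (coordinate transposition on solution configurations of $X_1Y_1+X_2Y_2=M$), the asymmetry between the classes is cleanly localized in the two diagonal families, and the $B$-diagonal dies by a one-line parity argument --- the analogue, in your setup, of the paper's implicit claim that $\tau$ is total and injective on $OEOE$. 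This also spares you the structural analysis of the missed partitions, since your diagonal count is immediate. What the paper's version buys is uniformity: $\tau$ belongs to the same family of 2-adic-valuation swaps ($\rho$, $\overline{\phi}$) employed throughout the main combinatorial proof, so the lemma sits naturally alongside the rest of the machinery.
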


\begin{proof} Let $\tau: OEOE \rightarrow EOEO$ be the map that swaps the 2-adic valuations of each part-multiplicity pair, and exchanges the two part sizes if necessary.  Note that the map is always well-defined in this direction.  The complement of its image is exactly those partitions in $EOEO$ where $\ell(\lambda_1) = \ell(\lambda_2) =: k$.  These partitions are of the form $$(2^{1+t}k)^a (2k)^b \quad , \quad t > 0 \quad , \quad k,a,b \equiv 1 \pmod{2} .$$ Hence they are in bijection with partitions of $n/2$ of the form $$(2^t k)^a (k)^b\quad , \quad t > 0 \quad , \quad k,a,b \equiv 1 \pmod{2} .$$  Hence $$\frac{n}{2} = k (2^t a + b).$$  But $n/2$ is odd, as is $k$, and $2^t a$ with $a$ odd is a unique way to write some even number.  Hence every positive solution of $$\frac{n}{2} = k (x+y)$$ of this equation yields such a partition, except that we take exactly the half of the solution set in which $x$ is even.  For each divisor $k$ there are $\frac{1}{2} \left(\frac{n}{2k} - 1\right)$ associated solutions, and the claim follows.
\end{proof}

Hence for $n \equiv 2 \pmod{4}$, it holds that $$2(OEOE) = 2(EOEO) - \sigma_1(n/2) + \frac{d(n)}{2}.$$  By Lemma \ref{odd pairs}, if $n \equiv 14 \pmod{16}$, the map $\rho$ has no fixed points in $EOEO$, and is an involution on this set, so $2(EOEO) \equiv 0 \pmod{4}$, and we know $\sigma_1(8j+7) \equiv 0 \pmod{8}$. Hence, after conjugating one copy of $OEOE$, we have the following result.

\begin{theorem}\label{twogroup} If $n \equiv 14 \pmod{16}$, then $$EOEE + OEOE \equiv \frac{d(n)}{2} \pmod{4}.$$
\end{theorem}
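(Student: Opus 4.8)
The plan is to read off the theorem from the relation established in Lemma \ref{twiceOEOE} together with the two divisibility facts recorded in the paragraph just above it, performing a single reduction modulo $4$.

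First I would take the doubled form of Lemma \ref{twiceOEOE}, namely
$$2(OEOE) = 2(EOEO) - \sigma_1(n/2) + \frac{d(n)}{2},$$
which is valid for all $n \equiv 2 \pmod 4$ and in particular for $n \equiv 14 \pmod{16}$. The left-hand side is already the quantity I want, because the parity classes $OEOE$ and $EOEE$ are interchanged by conjugation (Table \ref{parities}); since conjugation is a bijection these two classes have equal cardinality, and so $EOEE + OEOE = 2(OEOE)$.

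It then remains only to discard the two terms on the right that are not $\frac{d(n)}{2}$. The term $2(EOEO)$ vanishes modulo $4$: by Lemma \ref{odd pairs} the involution $\rho$ has no fixed points on $EOEO$ when $n \equiv 14 \pmod{16}$, so $EOEO$ is even and $2(EOEO) \equiv 0 \pmod 4$. Writing $n = 16j+14$ gives $n/2 = 8j+7$, and since $8j+7 \equiv 7 \pmod 8$ one has $\sigma_1(8j+7) \equiv 0 \pmod 8$, hence $\sigma_1(n/2) \equiv 0 \pmod 4$. Both facts are recorded immediately above the theorem, so I would simply invoke them. Substituting into the displayed identity yields $EOEE + OEOE = 2(OEOE) \equiv \frac{d(n)}{2} \pmod 4$, as claimed.

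I do not expect a genuine obstacle in this last assembly; it is a short deduction once the preceding machinery is in place. The real work sits one step earlier, in Lemma \ref{twiceOEOE}, whose bijection $\tau : OEOE \to EOEO$, together with the explicit count of the partitions of $EOEO$ omitted from its image, produces the exact correction term $\frac{\sigma_1(n/2)}{2} - \frac{d(n)}{4}$. Were I proving the theorem without that lemma available, that counting step — identifying the omitted partitions as those with $\ell(\lambda_1) = \ell(\lambda_2)$ and matching them bijectively with the appropriate half of the solutions of $\frac{n}{2} = k(x+y)$ — is where essentially all of the difficulty would lie.
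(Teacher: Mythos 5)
Your proposal is correct and follows essentially the same route as the paper: double Lemma \ref{twiceOEOE}, replace $2(OEOE)$ by $EOEE + OEOE$ via conjugation, kill $2(EOEO)$ modulo $4$ using the fixed-point-free involution $\rho$ from Lemma \ref{odd pairs}, and discard $\sigma_1(n/2) = \sigma_1(8j+7) \equiv 0 \pmod 8$. Your closing remark is also accurate — the paper likewise treats this theorem as a short assembly, with the substantive counting concentrated in Lemma \ref{twiceOEOE}.
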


But as we commented at the beginning of this section, $d(16j+14) \equiv 0 \pmod{4}$ and so $d(n)/2$ is even.  Hence truth of $$EOEE + OEOE + OEEO \equiv 0 \pmod{4}$$ yields the final corollary of this section,

\begin{corollary}\label{OEEOvalue} If $n \equiv 14 \pmod{16}$, then $OEEO \equiv \frac{d(n)}{2} \pmod{4}$.
\end{corollary}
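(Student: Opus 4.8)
The plan is to isolate $OEEO$ by subtracting the known value of $EOEE + OEOE$ from the combined total of all three remaining classes. Two ingredients are already in hand. The first is the congruence $EOEE + OEOE + OEEO \equiv 0 \pmod 4$, proved as the final lemma of the combinatorial argument, which is valid whenever $n \equiv 6 \pmod 8$ and hence in particular for $n \equiv 14 \pmod{16}$. The second is Theorem \ref{twogroup}, which gives $EOEE + OEOE \equiv \frac{d(n)}{2} \pmod 4$ for $n \equiv 14 \pmod{16}$. Since both relations hold for our range of $n$, they may be combined directly.

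First I would subtract the second relation from the first to obtain
$$OEEO \equiv -\frac{d(n)}{2} \pmod 4.$$
Next I would invoke the divisibility fact recorded at the start of this section. Writing $n = 16j + 14 = 2(8j+7)$, the odd factor satisfies $8j+7 \equiv 7 \pmod 8$, so it is not a perfect square; consequently $d(8j+7)$ is even. Because $8j+7$ is odd, multiplicativity of the divisor function gives $d(n) = d(2)\,d(8j+7) = 2\,d(8j+7)$, whence $d(n) \equiv 0 \pmod 4$ and $\frac{d(n)}{2}$ is even.

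Finally, since $\frac{d(n)}{2}$ is even, the two quantities $\frac{d(n)}{2}$ and $-\frac{d(n)}{2}$ differ by $d(n) \equiv 0 \pmod 4$, so the sign may be absorbed and $OEEO \equiv \frac{d(n)}{2} \pmod 4$, as claimed. There is no genuine obstacle at this stage: all the substantive work lives in Theorem \ref{twogroup} and in the combinatorial lemma supplying the total. The only point requiring care is the sign flip in passing from $-\frac{d(n)}{2}$ to $\frac{d(n)}{2}$, which is legitimate precisely because $d(16j+14)$ is divisible by $4$; were that divisibility merely by $2$, the two sides would differ modulo $4$ and the clean identification would fail.
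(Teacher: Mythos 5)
Your proposal is correct and follows essentially the same route as the paper: both combine the lemma $EOEE + OEOE + OEEO \equiv 0 \pmod 4$ with Theorem \ref{twogroup} and use the fact that $d(16j+14) \equiv 0 \pmod 4$ (since $8j+7$ is not a square) to identify $-\frac{d(n)}{2}$ with $\frac{d(n)}{2}$ modulo $4$. Your write-up merely makes explicit the sign-absorption step that the paper leaves implicit, which is a fair point of care but not a different argument.
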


Conversely, if Corollary \ref{OEEOvalue} could be proved independently, the main theorem of the paper would also follow.

\section{Future investigations}

The proof given here does not immediately generalize to other known congruences for $\nu_2$, such as $\nu_2(36n+30) \equiv 0 \pmod{4}$, so a natural question would be whether it can be adapted for these classes.  Indeed, it may be possible to prove an infinite class of congruences of this type by isolating what features of a numerical progression suffice to generalize the arguments.

The maps produced in this paper can apply more generally to larger classes of partitions.  It could be interesting to determine their utility in the proof of other identities.  The map $\overline{\phi}$ is of limited applicability, but the map $\rho$ which swaps the largest odd divisor of each part size and its multiplicity is an involution on the set of partitions wherein the values $2^{\lambda_i} \ell(m_i)$ are distinct.

Conjugation is the most elementary means of proving evenness of sets of partitions; when proving divisibility by 4, it would seem at first glance that pairing up conjugate pairs is a natural strategy to attempt, but to our knowledge this is not a common method of proof in combinatorial partition theory.  Of course, this may simply be due to the relative scarcity of combinatorially provable congruences modulo 4, but perhaps exploration of the technique could yield new results.

We wish to raise an alternative point of interest: another common method of proving that a set of partitions has a given divisibility, say by $m$, is to establish a \emph{rank statistic} on the set being considered, which divides the set into $m$ equal classes, or into classes each of which itself has cardinality divisible by $m$.  The usual rank of a partition is \emph{Dyson's rank}, after its proposer Freeman Dyson, who conjectured \cite{Dyson} that $$rk(\lambda) := \lambda_1 - \sum m_i,$$ that is, the largest part of a partition minus its number of parts, divides the partitions of $5n+4$ into five equal classes when reduced modulo 5.  This conjecture was later proved by Atkin and Swinnerton-Dyer \cite{ASD}.

Calculation has not suggested an equidistribution statistic for the congruences known to exist for $\nu_2(n)$.  However, we have observed data which suggests the following facts which would imply congruences modulo 4:

\begin{conjecture} For $\lambda = (\lambda_1^{m_1}, \lambda_2^{m_2})$, let $rk(\lambda)$ be Dyson's rank and let $rk_2(\lambda) = \lambda_1 + \lambda_2 - 2m_1 - m_2$.  Then it holds for $(A,B) \in \{(16,14), (36,30), (72,42), (196,70), (252,114) \}$ that the numbers of partitions of $An+B$ into two part sizes with $rk_2(\lambda)$ odd, or even, are both divisible by 4.  For Dyson's rank the same claim holds except for $16n+14$.
\end{conjecture}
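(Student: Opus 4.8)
The plan is to exploit one structural observation that ties the conjecture directly to the decomposition already used in the main proof. Writing $O=1$ and $E=0$ for the four symbols of a parity class $ABCD$, the definitions reduce modulo $2$ to
\[ rk(\lambda) \equiv A+B+D \quad\text{and}\quad rk_2(\lambda)\equiv A+C+D \pmod 2, \]
since $rk(\lambda)=\lambda_1-m_1-m_2$ and $rk_2(\lambda)=\lambda_1+\lambda_2-2m_1-m_2$. Thus the parity of each rank statistic is \emph{constant on every parity class}, so sorting the partitions of $An+B$ by rank parity merely groups the nine nonempty classes of Lemma \ref{classes}. Evaluating $A+C+D$ across the table, the even-$rk_2$ classes are exactly $EOEE$, $OEOE$, $OEEO$, and the odd-$rk_2$ classes are the remaining six $OOOO$, $EOOE$, $EEOE$, $EOEO$, $OEEE$, $EEEO$ -- precisely the two groups treated in Lemma \ref{sixgroup} and in the concluding lemma of the combinatorial proof. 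Consequently the $rk_2$ statement for $(A,B)=(16,14)$ is \emph{already established}: both groups were shown divisible by $4$.

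For the remaining four progressions I would first record two simplifications. By Lemma \ref{conjlemma} conjugation sends $\lambda$ to a partition whose rank statistics are $-rk(\lambda)$ and $-rk_2(\lambda)$, so conjugation preserves both rank parities; since no partition here is self-conjugate, each rank-parity class is automatically even. Moreover every $An+B$ in the list is $\equiv 2 \pmod 4$, so Lemma \ref{classes} applies verbatim, and granting the ambient congruence $\nu_2(An+B)\equiv 0 \pmod 4$ (known for each pair) it suffices to prove that \emph{one} of the two rank-parity classes is divisible by $4$, the complement following by subtraction. The plan is then to rerun the involution machinery of Section 3 -- the maps $\rho$, $\overline{\phi}$, and $\mathrm{Conj}\circ\rho\circ\mathrm{Conj}$ -- targeting the group $EOEE+OEOE+OEEO$ for each progression, re-deriving the arithmetic inputs (the analogue of Lemma \ref{odd pairs} and of the ``sum of two powers of two'' step) with the residue appropriate to that progression.

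For Dyson's rank the same table gives even-$rk$ classes $EEOE$, $EOEO$, $OEEO$ and odd-$rk$ classes the other six. Using $|EEOE|=|EOEO|$ (a conjugate pair) and the evenness of $OEEO$, the even-$rk$ count equals $2\,EEOE+OEEO$, while the odd-$rk$ count equals $2(OOOO+OEEE+EOEE)$. This exposes the stated exception: at $16n+14$ one has $OEEO\equiv d(n)/2\pmod 4$ by Corollary \ref{OEEOvalue}, but nothing forces $2\,EEOE\equiv -OEEO\pmod 4$, so the refinement need not hold -- exactly the case the conjecture omits. For the other four progressions the plan is to construct an involution stabilizing $\{EEOE,EOEO,OEEO\}$, equivalently $2\,EEOE+OEEO$, without fixed points.

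The main obstacle is twofold. First, the number-theoretic inputs that make $\rho$ and $\mathrm{Conj}\circ\rho\circ\mathrm{Conj}$ fixed-point-free -- that the relevant residue is not a sum of two squares, nor $n/2$ a sum of two powers of two -- are specific to $16n+14$ and must be replaced progression by progression; I expect the hardest step to be isolating a uniform arithmetic criterion on $(A,B)$ making these arguments go through, as already flagged in the closing remarks. Second, whereas the $rk_2$ groups coincide with the natural orbits of $\rho$, the even-$rk$ group $\{EEOE,EOEO,OEEO\}$ is not obviously stabilized by the paper's maps, so the Dyson statement for the four non-exceptional progressions will likely require a genuinely new involution on that grouping.
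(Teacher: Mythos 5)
You have attempted to prove a statement that the paper itself leaves as a conjecture, so there is no proof of record to compare against; the question is whether your argument settles it, and it does not, though it contains two correct and genuinely valuable partial results. Your parity computation is right: modulo $2$ one has $rk \equiv \lambda_1+m_1+m_2$ and $rk_2 \equiv \lambda_1+\lambda_2+m_2$, both constant on parity classes, and the even-$rk_2$ classes are exactly $EOEE$, $OEOE$, $OEEO$. Hence for $(A,B)=(16,14)$ the $rk_2$ claim really is a theorem already proved by the paper: Lemma \ref{sixgroup} disposes of the six odd-$rk_2$ classes and the final (unnumbered) lemma of Section 3 disposes of the three even-$rk_2$ ones --- a reformulation the paper does not record. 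Your diagnosis of the Dyson exception is also correct and can be pushed further than you state: the paper shows $\rho$ is fixed-point-free on $EOEO$ for numbers $\equiv 14 \pmod{16}$, so $2\,EEOE = 2\,EOEO \equiv 0 \pmod 4$, and your identity (even-$rk$ count) $= 2\,EEOE + OEEO$ combined with Corollary \ref{OEEOvalue} gives (even-$rk$ count) $\equiv d(N)/2 \pmod 4$ for $N \equiv 14 \pmod{16}$; taking $N=14$, where $d(14)=4$, the count is $\equiv 2 \pmod 4$, so the exclusion of $16n+14$ is provably necessary rather than merely ``unforced.''

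The genuine gap is everything else, which is most of the conjecture: for $(36,30)$, $(72,42)$, $(196,70)$, $(252,114)$, under both statistics, you offer a program rather than a proof, as your own closing paragraph concedes. The obstruction is concrete. Fixed-point-freeness of $\rho$ rests on Lemma \ref{odd pairs}, which needs $N/2 \equiv 7 \pmod 8$, and the fixed-point exclusion for $\textrm{Conj}\circ\rho\circ\textrm{Conj}$ needs $N \equiv 6 \pmod 8$ (a sum of two odd squares is $2 \pmod 8$); but, for example, $36n+30 \equiv 2 \pmod 8$ whenever $n$ is odd, and $36n+30 \equiv 14 \pmod{16}$ only when $4 \mid n$, so both arithmetic inputs fail on a positive proportion of each remaining progression and cannot be restored by merely substituting residues --- a genuinely new idea is required, which is exactly why the paper's final section says the proof ``does not immediately generalize.'' Two further caveats: your reduction to a single rank-parity class imports the ambient congruences $\nu_2(An+B) \equiv 0 \pmod 4$, which for these four progressions are known only via modular forms, so even a completed run of your program would not be fully combinatorial; and, as you correctly note, the even-$rk$ grouping $\{EEOE, EOEO, OEEO\}$ is not stabilized by any of the paper's maps, and no replacement involution is supplied. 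In sum: correct and complete for $(16,14)$ under both statistics (affirmatively for $rk_2$, and with a proof that the Dyson exception is real), but open for the remaining eight cases.
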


In fact, in both cases the numbers of partitions with the statistic being $0 \pmod{4}$ are particularly divisible by 4, although this is superfluous to proof of the congruence.  This is the entire list of congruences current known for $\nu_2(An+B)$, and one wonders if the property holds in general for all such congruences.  A further interesting observation is that a more refined statistic, the \emph{crank}, which witnesses considerably more congruences for ordinary partitions than the rank and is generally considered the more useful and deeper statistic, does \emph{not} have the same divisibility properties for these sequences.

\end{document}